\newtheorem{thm}[subsection]{Theorem}
\newtheorem{prop}[subsection]{Proposition}
\newtheorem{lem}[subsection]{Lemma}
\theoremstyle{definition}
\newtheorem{Def}[subsection]{Definition}
\newtheorem{remark}[subsection]{Remark}
\newtheorem{example}[subsection]{Example}
\newcommand{\OOO}{{\mathcal O}}
\numberwithin{equation}{section}
\begin{document}

\title[Direct image of the adjoint line bundle]{On the direct image of the adjoint line bundle}

\author[I.Biswas]{Indranil Biswas}

\address{School of Mathematics, Tata Institute of Fundamental Research, 1 Homi Bhabha Road,
Colaba, Mumbai 400005}

\email{indranil@math.tifr.res.in}

\author[F. Laytimi]{Fatima Laytimi}

\address {Math\'ematiques - b\^{a}t. M2, Universit\'e Lille 1,
F-59655 Villeneuve d'Ascq Cedex, France}

\email {fatima.laytimi@univ-lille.fr}

\author[D. S. Nagaraj]{D. S. Nagaraj}

\address {Indian Institute of Science Education and Research, Tirupati}

\email {dsn@iisertirupati.ac.in}

\author[W. Nahm]{Werner Nahm}

\address{Dublin Institute for Advanced Studies, 10 Burlington Road,
Dublin 4, Ireland}

\email{wnahm@stp.dias.ie}

\subjclass[2010]{14F17, 14J60}

\keywords{Nef bundle; ample bundle; adjoint system; relative ample bundle}

\date{}

\begin{abstract} 
We give an algebro-geometric proof of the fact that for a smooth fibration $\pi\,:\,X\,\longrightarrow\, Y$ of projective varieties,
the direct image $\pi_*(L\otimes K_{X/Y})$ of the adjoint line bundle
of an ample (respectively, nef and $\pi$-strongly big) line bundle $L$ is ample (respectively, nef and big).
\end{abstract}

\maketitle

\section{Introduction} \setcounter{page}{1}

Throughout we work over the field $\mathbb{C}$ of complex numbers.
For the standard notation used here the reader is referred to \cite{La1}, \cite{La2}.

We first give an algebraic proof of the following: 

\begin{thm}\label{main1}
Let $\pi\,:\,X\,\longrightarrow\, Y$ be a smooth fibration of smooth projective varieties
and $K_{X/Y}$ the relative canonical line bundle for it. For any ample line bundle $L$ on $X$
the direct image $\pi_* (L\otimes K_{X/Y})$ is either zero or an ample vector bundle. 
\end{thm}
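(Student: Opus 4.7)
The plan is to reduce to the case $\dim Y = 1$ via a test-curve criterion, and then on a curve prove ampleness by showing that every nonzero map $\pi_*(L \otimes K_{X/Y}) \to M$ to a line bundle $M$ forces $\deg M > 0$; this positivity is extracted from Kodaira vanishing on $X$ through a chain of Serre-duality identifications.

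First I would establish local freeness of $E := \pi_*(L \otimes K_{X/Y})$. Since $\pi$ is smooth and $L|_{X_y}$ is ample on the smooth projective fiber $X_y$, Kodaira vanishing gives $H^i(X_y, L|_{X_y} \otimes K_{X_y}) = 0$ for $i > 0$; cohomology and base change then force $R^i\pi_*(L \otimes K_{X/Y}) = 0$ for $i > 0$ and show that $E$ is locally free with formation commuting with arbitrary base change. Assuming $E \neq 0$, by the standard test-curve criterion for ampleness of vector bundles it suffices to show $\phi^*E$ is ample on $C$ for every finite morphism $\phi \colon C \to Y$ from a smooth projective curve. Given such $\phi$, form $X_C := X \times_Y C$ with its induced smooth fibration $\pi_C \colon X_C \to C$ and finite morphism $g \colon X_C \to X$; then $g^*L$ is ample on $X_C$ (pullback of ample under a finite morphism), and the base change yields $\phi^*E \cong \pi_{C*}(g^*L \otimes K_{X_C/C})$. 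So one may assume $Y$ is a smooth projective curve.

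Assume now $\dim Y = 1$, so $\dim X = n + 1$ with $n$ the relative dimension. Let $E \to M$ be any nonzero map to a line bundle; the dual inclusion $M^{-1} \hookrightarrow E^\vee$ gives a nonzero section of $E^\vee \otimes M$. Relative Serre duality together with the fiberwise vanishing $H^i(X_y, L^{-1}|_{X_y}) = 0$ for $i < n$ identifies $E^\vee \cong R^n\pi_*L^{-1}$; the projection formula combined with Leray degeneration then yields
\[
H^0(Y, E^\vee \otimes M) \;\cong\; H^n(X, L^{-1} \otimes \pi^*M),
\]
which by absolute Serre duality on the $(n+1)$-dimensional $X$ is dual to $H^1(X, L \otimes \pi^*M^{-1} \otimes K_X)$. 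The nonzero map $E \to M$ therefore produces a nonzero class in this $H^1$. Suppose for contradiction that $\deg M \le 0$: then $\pi^*M^{-1}$ is nef on $X$, so $L \otimes \pi^*M^{-1}$ is the sum of an ample and a nef line bundle and is therefore ample; Kodaira vanishing then forces $H^1(X, L \otimes \pi^*M^{-1} \otimes K_X) = 0$, a contradiction. Hence $\deg M > 0$.

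The main technical point I expect to be delicate is closing the ampleness argument on the curve: one must upgrade ``every quotient line bundle of $E$ has positive degree'' to genuine ampleness of $E$, which classically requires positivity of every quotient bundle, not just of line bundle quotients. For a higher-rank semistable quotient $F$ one runs the analogous Serre-duality and Leray computation with $F^\vee$ in place of $M^{-1}$, and must then invoke a Kodaira-type vanishing for the ample vector bundle $L \otimes \pi^*F^\vee$ -- here classical Kodaira handles only the rank-one case, and the general case uses refined vanishings (of Griffiths, Le Potier, or Laytimi--Nahm type) applicable to tensor products of ample line bundles with pullbacks of nef vector bundles. Once such a vanishing is available, the remaining ingredients -- the test-curve reduction, ampleness of $g^*L$ under finite base change, the Serre duality chain, and the ``ample $+$ nef $=$ ample'' step -- are standard.
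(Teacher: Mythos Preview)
Your route is quite different from the paper's. The paper never reduces to curves or tests against quotients: it fixes an ample line bundle $\xi$ on $Y$, takes $d\gg 0$ so that $L^{\otimes d}\otimes\pi^*\xi^{-1}$ is ample, and invokes the Bloch--Gieseker covering trick to produce a finite surjection $f\colon\widetilde Y\to Y$ on which $f^*\xi\simeq M^{\otimes d}$ for some ample line bundle $M$. On the fibre product $\widetilde X$ the line bundle $\mathcal L=\widetilde f^{\,*}L\otimes\widetilde\pi^{\,*}M^{-1}$ is then ample, and the key external input is Viehweg's theorem that $\widetilde\pi_*(\mathcal L\otimes K_{\widetilde X/\widetilde Y})$ is nef. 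Twisting back by the ample $M$ and using that a nef bundle tensored with an ample line bundle is ample gives ampleness of $f^*\pi_*(L\otimes K_{X/Y})$, hence of $\pi_*(L\otimes K_{X/Y})$ itself. So the paper offloads the hard positivity to Viehweg, whereas you attempt to extract everything from Kodaira vanishing alone.

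The gap you flag in your last paragraph is real and, along the lines you propose, not repairable. First, positivity of every line-bundle quotient of $\phi^*E$ over every curve $\phi\colon C\to Y$ does \emph{not} imply ampleness of $E$: Mumford's stable rank-two bundle of degree zero on a curve with all symmetric powers stable has $\mathcal O_{\mathbb P(E)}(1)$ strictly positive on every curve in $\mathbb P(E)$ yet not ample. Second, the Kodaira-type vanishing you would need for higher-rank quotients fails on two counts. The general statement ``$H^1(X,K_X\otimes L\otimes N)=0$ for $L$ an ample line bundle and $N$ a nef vector bundle'' is simply false: on $X=\mathbb P^2$ with $L=\mathcal O(1)$ and $N=T_{\mathbb P^2}(-1)$ (globally generated, hence nef) one has $K_X\otimes L\otimes N\cong T_{\mathbb P^2}(-3)$, and the Euler sequence gives $h^1=1$. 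Worse, in your specific situation, where $N=\pi^*F^\vee$ is pulled back from the curve, the Leray spectral sequence together with Serre duality on $Y$ unwinds $H^1(X,K_X\otimes L\otimes\pi^*F^\vee)$ back to $H^0(Y,E^\vee\otimes F)^\vee$, so the desired vanishing is literally equivalent to the vanishing of $\operatorname{Hom}(E,F)$ you are trying to establish, and the argument is circular. To close this gap one is essentially forced to import an independent positivity statement such as Viehweg's nefness theorem, which is precisely what the paper does.
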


Mourougane in \cite{Mour} has proved by analytic methods that the vector bundle $\pi_* (L\otimes K_{X/Y})$ is Griffith positive (hence ample).

We define below the notion of $\pi$-strongly big line bundle (see, Definition \ref{sbig}) and prove the following:

\begin{thm}\label{main2}
Let $\pi\,:\,X\,\longrightarrow\, Y$ be a smooth fibration of projective varieties.
If L is a $\pi$-strongly big line bundle on $X,$ then the vector bundle $\pi_* (L\otimes K_{X/Y})$
is big. Moreover, if L is nef and $\pi$-strongly big, then
$\pi_* (L\otimes K_{X/Y})$ is also nef and big.
\end{thm}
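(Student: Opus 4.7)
My plan is to reduce Theorem \ref{main2} to Theorem \ref{main1}. I expect the definition of $\pi$-strongly big (Definition \ref{sbig}) to provide, possibly after passing to a tensor power, a decomposition $L\cong A\otimes\OOO_X(E)$ where $A$ is an ample line bundle on $X$ and $E$ is an effective divisor whose restriction to the general fiber of $\pi$ is a proper divisor. Given such a decomposition, the tautological section of $\OOO_X(E)$ yields a sheaf inclusion $A\otimes K_{X/Y}\hookrightarrow L\otimes K_{X/Y}$, and applying the left-exact functor $\pi_*$ produces an injection
\begin{equation*}
\pi_*(A\otimes K_{X/Y})\,\hookrightarrow\,\pi_*(L\otimes K_{X/Y}).
\end{equation*}
By Theorem \ref{main1} the source is an ample vector bundle. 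Local freeness of the target follows from relative Kawamata--Viehweg vanishing applied fiberwise, once $L|_{X_y}$ is known to be big (and nef, when needed).

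Bigness of $\pi_*(L\otimes K_{X/Y})$ then follows from the general principle that any locally free sheaf $V$ on $Y$ containing an ample vector bundle $V'$ as a subsheaf is big. The argument is to lift the inclusion to symmetric powers: $V'\hookrightarrow V$ induces $\mathrm{Sym}^k V'\hookrightarrow \mathrm{Sym}^k V$ (the kernel lies in the torsion-free $\mathrm{Sym}^k V'$ and vanishes at the generic point). Ampleness of $V'$ then supplies, for any ample $H$ on $Y$ and all $k$ sufficiently large, a nonzero section of $\mathrm{Sym}^k V'\otimes H^{-1}$, which transports via the injection to a section of $\mathrm{Sym}^k V\otimes H^{-1}$. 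Using $p_*\OOO_{\mathbb{P}(V)}(k)=\mathrm{Sym}^k V$ where $p\,:\,\mathbb{P}(V)\to Y$, this is equivalent to $\OOO_{\mathbb{P}(V)}(1)$ being big, i.e., $V$ being big. If the definition of $\pi$-strongly big genuinely requires a power $L^{\otimes m}$, an additional passage is needed to transfer bigness from $\pi_*(L^{\otimes m}\otimes K_{X/Y})$ back to $\pi_*(L\otimes K_{X/Y})$, most naturally via the multiplication map $\mathrm{Sym}^m\pi_*(L\otimes K_{X/Y})\to \pi_*((L\otimes K_{X/Y})^{\otimes m})$.

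For the nef-and-big assertion, the added nefness of $L$ has to upgrade the bigness to nefness. The natural approach is to verify nefness numerically: for every morphism $\iota\,:\,C\to Y$ from a smooth projective curve, base change to $\pi_C\,:\,X_C\to C$ and use that $\iota^*\pi_*(L\otimes K_{X/Y})\cong\pi_{C*}(L|_{X_C}\otimes K_{X_C/C})$ (flat base change being justified by the vanishing $R^i\pi_*=0$ obtained above), and then show that every quotient line bundle of the right-hand side has nonnegative degree by combining the nefness of $L|_{X_C}$ with the ample piece $A|_{X_C}$ coming from the $\pi$-strongly big decomposition. The main obstacle is precisely this last step: the analogous semi-positivity theorems for direct images (Fujita, Kawamata, Viehweg, Berndtsson) traditionally use Hodge-theoretic or analytic input, and producing a purely algebraic derivation --- likely via an approximation of the nef case by the ample case treated in Theorem \ref{main1}, combined with the reduction to curves --- is the most delicate point of the argument.
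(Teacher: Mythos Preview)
Your proposal has a real gap at precisely the point you flag as uncertain. By Definition \ref{sbig} one only obtains that $L^{d}\otimes\pi^{*}\OOO_{Y}(-D)$ is ample for some $d>0$ and some effective simple-normal-crossing divisor $D$ \emph{on $Y$}; in general there is no decomposition $L\cong A\otimes\OOO_{X}(E)$ with $A$ ample. Your suggested workaround, the multiplication map
\[
\mathrm{Sym}^{m}\pi_{*}(L\otimes K_{X/Y})\ \longrightarrow\ \pi_{*}\bigl((L\otimes K_{X/Y})^{\otimes m}\bigr),
\]
lands in $\pi_{*}(L^{m}\otimes K_{X/Y}^{\otimes m})$, not in $\pi_{*}(L^{m}\otimes K_{X/Y})$, so even if you knew the latter to be big this map would not transport that information back to $\pi_{*}(L\otimes K_{X/Y})$. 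You also misread the shape of $E$: since $D$ lives on $Y$, the pullback $\pi^{*}D$ restricts \emph{trivially} to every fibre, not as a ``proper divisor'', and this is what makes the projection formula available below.

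The paper's device for removing the exponent $d$ is a Kawamata covering (Lemma \ref{Kaw}): one produces a finite surjection $f\colon\widetilde{Y}\to Y$ and an \emph{effective} divisor $\widetilde{D}$ on $\widetilde{Y}$ with $f^{*}\OOO_{Y}(D)\simeq\OOO_{\widetilde{Y}}(d\widetilde{D})$. On the base-changed family $\widetilde{\pi}\colon\widetilde{X}\to\widetilde{Y}$ the line bundle $\mathcal{L}:=\widetilde{f}^{*}L\otimes\widetilde{\pi}^{*}\OOO_{\widetilde{Y}}(-\widetilde{D})$ has ample $d$-th power, hence is itself ample, and Theorem \ref{main1} applies to $\mathcal{L}$ directly. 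Because the twist sits on the base, the projection formula gives
\[
f^{*}\pi_{*}(L\otimes K_{X/Y})\ \cong\ \widetilde{\pi}_{*}\bigl(\widetilde{f}^{*}L\otimes K_{\widetilde{X}/\widetilde{Y}}\bigr)\ \cong\ \widetilde{\pi}_{*}\bigl(\mathcal{L}\otimes K_{\widetilde{X}/\widetilde{Y}}\bigr)\otimes\OOO_{\widetilde{Y}}(\widetilde{D}),
\]
which is (ample)$\,\otimes\,$(effective), hence big; bigness then descends along the finite map $f$. This replaces your subsheaf-inclusion manoeuvre entirely. Finally, the paper does not attempt an independent algebraic proof of the nef statement: it simply invokes Mourougane's theorem through Proposition \ref{prop1}, so your elaborate curve-by-curve reduction is not needed here.
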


\section{Proof of Theorem \ref{main1}}

We need to recall following known results.

\begin{thm}[{\cite[Theorem 4.1.10]{La1}}]\label{BG}
Let $\xi$ be a line bundle on a projective variety $Y$. Fix a positive integer $d$. Then there exists a projective variety $\widetilde Y$,
a finite surjective morphism $f\,:\,\widetilde{Y} \,\longrightarrow\, Y$ and a line bundle $M$ on $\widetilde{Y}$, such that $f^*\xi
\,\simeq\, M^{\otimes d}.$
\end{thm}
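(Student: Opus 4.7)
The plan is to invoke the Bloch--Gieseker covering construction, whose core idea is to pull back via the $d$-th power endomorphism of a projective space. First I would reduce to the case when $\xi$ itself is very ample. Pick any very ample line bundle $B$ on $Y$; for $k$ sufficiently large $\xi' := \xi \otimes B^{\otimes kd}$ is very ample. Once the theorem is established for $\xi'$, yielding a finite surjective $f\colon \widetilde Y \to Y$ and a line bundle $M'$ on $\widetilde Y$ with $f^*\xi' \simeq M'^{\otimes d}$, I can set $M := M' \otimes (f^*B)^{\otimes -k}$ and then $f^*\xi \simeq M^{\otimes d}$.

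Now assume $\xi$ is very ample, giving a closed embedding $i\colon Y \hookrightarrow \mathbb{P}^N$ with $i^*\mathcal{O}_{\mathbb{P}^N}(1) \simeq \xi$. Consider the finite surjective morphism
\[
\phi_d\colon \mathbb{P}^N \longrightarrow \mathbb{P}^N, \qquad [x_0:\cdots:x_N] \longmapsto [x_0^d:\cdots:x_N^d],
\]
which satisfies $\phi_d^*\mathcal{O}(1) \simeq \mathcal{O}(d)$. Form the fiber product $W := Y \times_{\mathbb{P}^N} \mathbb{P}^N$ taken along $i$ and $\phi_d$. Since $\phi_d$ is finite and surjective, so is the first projection $p_1\colon W \to Y$. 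Writing $p_2\colon W \to \mathbb{P}^N$ for the second projection, commutativity of the square gives
\[
p_1^*\xi \;\simeq\; p_2^*\phi_d^*\mathcal{O}(1) \;\simeq\; (p_2^*\mathcal{O}(1))^{\otimes d}.
\]

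The scheme $W$ may fail to be irreducible or reduced, so I take an irreducible component $\widetilde Y \subseteq W$ that dominates $Y$ (at least one exists, since $W$ surjects onto $Y$ and has only finitely many components), equipped with the reduced induced structure. Setting $f := p_1|_{\widetilde Y}$ and $M := p_2^*\mathcal{O}(1)|_{\widetilde Y}$, restriction of the identity above yields $f^*\xi \simeq M^{\otimes d}$. As $\widetilde Y$ is closed in the finite $Y$-scheme $W$ and dominates $Y$, the morphism $f$ is automatically finite and surjective.

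The main obstacle is producing a genuinely irreducible $\widetilde Y$: the fiber product $W$ is typically reducible (already for $Y = \mathbb{P}^N$ and $i$ the identity the Kummer cover decomposes), so one cannot simply take $W$ itself. Everything else is formal: the reduction to the very ample case uses only Serre's theorem that $\xi \otimes B^{\otimes m}$ is very ample for $m \gg 0$, and the endomorphism $\phi_d$ is designed precisely to turn the pulled-back hyperplane class into a $d$-th tensor power.
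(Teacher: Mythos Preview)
The paper does not supply its own proof of this statement; it is quoted from \cite[Theorem~4.1.10]{La1} and used as a black box. Your argument is precisely the Bloch--Gieseker covering construction given in that reference (reduction to the very ample case, pullback along the $d$-th power endomorphism of $\mathbb{P}^N$, and passage to a dominating irreducible component with reduced structure), so the approaches coincide.
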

 
\begin{thm}[{\cite[Theorem 2.43]{Vie}}]\label{Vie} 
Let $\pi\,:\,X\,\longrightarrow\, Y$ be a smooth fibration of projective varieties.
If $L$ is semi-ample, then $\pi_*(L\otimes K_{X/Y})$ is nef.
\end{thm}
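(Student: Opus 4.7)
The plan is to deduce this statement from the classical theorem of Fujita--Kawamata that the direct image of the relative dualizing sheaf of a smooth projective fibration is a nef vector bundle. The role of the semi-ample hypothesis on $L$ will be to manufacture, via a cyclic cover, a new smooth fibration $\widetilde\pi\,:\,\widetilde X \to Y$ in whose relative-canonical pushforward the bundle $\pi_*(L\otimes K_{X/Y})$ sits as a direct summand.

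First I would fix $m\geq 1$ with $L^{\otimes m}$ globally generated, which exists by semi-ampleness. After possibly enlarging $m$ and performing a finite base change of the type supplied by Theorem \ref{BG}, I would select a smooth divisor $D\in |L^{\otimes m}|$ that is transverse to every fiber $X_y$ of $\pi$; the relative Bertini argument used here is the essential geometric input. The associated degree-$m$ cyclic cover $\sigma\,:\,\widetilde X\,\longrightarrow\, X$ branched over $D$ is then smooth, and the composition $\widetilde\pi:=\pi\circ\sigma$ is again a smooth fibration onto $Y$.

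Standard cyclic-cover calculus next gives $\sigma_*\mathcal{O}_{\widetilde X}\cong\bigoplus_{i=0}^{m-1}L^{\otimes(-i)}$, and, via the Hurwitz formula applied to the reduced ramification divisor $\widetilde D$ (which satisfies $\mathcal{O}_{\widetilde X}(\widetilde D)\cong\sigma^*L$), the identity $K_{\widetilde X/Y}\cong\sigma^*\bigl(K_{X/Y}\otimes L^{\otimes(m-1)}\bigr)$. The projection formula then produces the splitting
\[
\widetilde\pi_*K_{\widetilde X/Y}\;\cong\;\bigoplus_{i=0}^{m-1}\pi_*\bigl(K_{X/Y}\otimes L^{\otimes(m-1-i)}\bigr),
\]
in which $\pi_*(L\otimes K_{X/Y})$ appears as the summand indexed by $i=m-2$. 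Applying the Fujita--Kawamata theorem to the smooth fibration $\widetilde\pi$ shows the left-hand side is nef; since every direct summand of a vector bundle is a quotient of it and quotients of nef bundles are nef, the conclusion follows.

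The principal obstacle is the very first step: producing a divisor $D\in|L^{\otimes m}|$ that is simultaneously smooth on $X$ and transverse to every fiber of $\pi$. Semi-ampleness only guarantees base-point freeness on the total space, and for some families a single global $D$ enjoying both properties need not exist. The standard remedy is to pass to a carefully chosen finite base change $Y'\to Y$, invoking Theorem \ref{BG} to resolve line-bundle divisibility obstructions, carry out the cyclic-cover construction and the Fujita--Kawamata step upstairs, and then descend nefness to $Y$ using that nefness is both preserved by and detected through finite surjective pullbacks. Everything past this geometric input reduces to a formal manipulation of projection and Hurwitz formulas.
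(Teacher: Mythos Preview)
The paper does not supply its own proof of this statement: Theorem~\ref{Vie} is quoted verbatim from \cite[Theorem~2.43]{Vie} and used as a black box in the proof of Theorem~\ref{main1}. So there is no argument in the paper to compare your proposal against.

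That said, your sketch is precisely the strategy Viehweg uses. Reducing to the Fujita--Kawamata theorem for $\widetilde\pi_*K_{\widetilde X/Y}$ by constructing a cyclic cover branched along a member of $|L^{\otimes m}|$, and then reading off $\pi_*(L\otimes K_{X/Y})$ as a direct summand via the decomposition of $\sigma_*\mathcal{O}_{\widetilde X}$, is exactly the mechanism behind \cite[Theorem~2.43]{Vie}. Your identification of the relative Bertini step as the only nontrivial input is accurate: one cannot in general find a single $D\in|L^{\otimes m}|$ smooth and transverse to every fibre, and Viehweg's workaround is to argue locally on the base (or after a suitable base change) and exploit that nefness is a pointwise condition on $Y$ which can be tested after finite pullback. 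Once that is in place, the Hurwitz and projection-formula computations you wrote are correct (note only that one needs $m\geq 2$ so that the index $i=m-2$ is admissible).
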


Take $\pi$ as above.
If $L$ is an ample bundle on $X$, then the restriction $L\big\vert_{X_y}$ of $L$ to the fiber $X_y\,:=\, \pi^{-1}(y)$ over
$y\,\in\, Y$ is ample, and hence from Kodaira vanishing theorem it follows that $\pi_* (L\otimes K_{X/Y})$ is either $0$
or a vector bundle on $Y.$

Now we start the proof Theorem \ref{main1}. Let $L$ be an ample line bundle on $X$ and $\xi$ a line bundle on $Y.$ 
Then there is an integer $d\,>\,0$ such that $L^d \otimes \pi^*\xi^*$ is ample, where
$\xi^*$ is the dual of the line bundle $\xi$. We assume that $\xi$ is ample. 
By Theorem \ref{BG} there exists a projective variety $\widetilde{Y},$ a finite surjective morphism $f\,:\,\widetilde{Y}\,
\longrightarrow\, Y$ and a line bundle $M$ on $\widetilde{Y},$ such that
\begin{equation}\label{t1}
f^*\xi\,= \,M^{\otimes d}.
\end{equation}
Note that $f^*\xi$ is ample because $f$ is a finite map and $\xi$ is ample. Hence from \eqref{t1} it follows that $M$ is ample
(recall that $d\, >\, 0$). We have a commutative diagram:
\begin{equation}\label{e1}
\begin{CD}
	\widetilde{X} @>\widetilde{f}>> X \\
	@VV\widetilde{\pi} V @VV \pi V\\
	\widetilde{Y} @>f>> Y
\end{CD}
\end{equation}
The line bundle $L^d \otimes \pi^*\xi^*$ is ample if and only if $ \widetilde f^*( L^d \otimes \pi^*\xi^*) $ is ample,
because $ \widetilde f$ is finite. We have
\[ \begin{array}{rcl}
{\widetilde f}^*(L^d \otimes \pi^*\xi^*) &= &({\widetilde f}^*L^d)\otimes ({\widetilde f}^*{\pi^*}{\xi}^*)\\
{} &=& (\widetilde{f}^*L^d) \otimes (\widetilde{\pi}^*{f^*}\xi^*)\\
{}&=& (\widetilde{f}^*L^d)\otimes (\widetilde{\pi}^* ({M^*})^d) \\
{}& = & ((\widetilde{f}^*L)\otimes (\widetilde{\pi}^* M^*))^d.
\end{array} \]
Set $\mathcal{L}\,=\,\widetilde{f}^*L\otimes \widetilde{\pi}^* M^*;$
then $\mathcal L$ is ample.

{}From Theorem \ref{Vie} we know that $\widetilde{\pi}_*(\mathcal {L}\otimes K_{\widetilde{X}/\widetilde{Y}})$ is nef. Using projection formula,
$$\widetilde{\pi}_*(\mathcal{L}\otimes K_{\widetilde{X}/\widetilde{Y}})\,=\, \widetilde{\pi}_*(\widetilde{f}^{*}{L}\otimes
K_{\widetilde{X}/\widetilde{Y}})\otimes M^*.$$
The vector bundle $\widetilde{\pi}_*((\widetilde{f}^*{L})\otimes K_{\widetilde{X}/\widetilde{Y}})\,=\, \widetilde{\pi}_*((\widetilde{f}^*{L})\otimes
K_{\widetilde{X}/\widetilde{Y}})\otimes M^* \otimes M$ is ample, since the tensor product of a nef vector bundle
and an ample vector bundle is ample (see \cite[Lemma 1.3]{FL}). 

But $\widetilde{\pi}_*((\widetilde{f}^*{L})\otimes K_{\widetilde{X}/\widetilde{Y}})\,=\, f^*(\pi_*(L\otimes K_{X/Y}))$ by the commutative diagram
in \eqref{e1}. Hence $\pi_*(L\otimes K_{X/Y})$ is ample. This completes the proof of Theorem \ref{main1}.

\begin{remark}
For any ample line bundle $L$ on a smooth projective variety $Z$, the vanishing theorem of Kodaira says
that $H^i(Z,\, L\otimes K_Z)\,=\, 0$ for all $i\, \geq\,1$, where $K_Z$ is the canonical line
bundle of $Z$ \cite{La1}. Let $\pi\,:\,X\,\longrightarrow\, Y$ be a smooth fibration of smooth projective
varieties. Let $L$ be an ample line bundle on $X$. From Kodaira vanishing theorem it now follows that
$R^i\pi_* (L\otimes K_{X/Y})\,=\, 0$ for all $i\,\geq\, 1$.
\end{remark}

\begin{remark}
In Theorem \ref{main1} set $X\,=\, {\mathbb P}(E)$, where $E$ is a vector bundle on $Y$. Let $L$ be an
ample line bundle on ${\mathbb P}(E)$. Assume that $\pi_*(L\otimes K_{X/Y})\,\not=\, 0$. Then there is
a unique positive integer $d$ and a unique line bundle $L_0$ on $Y$ such that
$$
L\otimes K_{X/Y}\,=\, {\mathcal O}_{{\mathbb P}(E)}(d)\otimes \pi^*L_0.
$$
Hence using the projection formula, we have
$$
\pi_*(L\otimes K_{X/Y})\,=\, \pi_*({\mathcal O}_{{\mathbb P}(E)}(d)\otimes \pi^*L_0)\,=\,
\text{Sym}^d(E)\otimes L_0,
$$
where $\text{Sym}^d(E)$ is the $d$-th symmetric product of $E$. Now Theorem \ref{main1} says that
$\text{Sym}^d(E)\otimes L_0$ is ample.

When $Y$ is a smooth projective curve, $\text{Sym}^d(E)\otimes L_0$ is ample if and only if
$\mu_{\rm min}(\text{Sym}^d(E)\otimes L_0)\, >\, 0$ \cite[p.~84, Theorem 2.4]{Ha}.
\end{remark}

\section{Proof of Theorem \ref{main2}}
 
\begin{Def}\label{sbig}
Let $\pi\,:\,X\,\longrightarrow\, Y$
be a smooth fibration of smooth projective varieties. A line bundle $L$ on $X$ is said to be {\em $\pi$-strongly big} if there
is an effective divisor $D$ on $Y$ with simple normal crossing support (see \cite[Definition 4.1.1]{La1})
such that $L^{d}\otimes \pi^{*}(\OOO_Y(D))^*$ is ample for some integer
$d\,>\,0,$ where $\pi^{*}(\OOO_Y(D))^*\,=\, \pi^{*}(\OOO_Y(-D)).$
\end{Def}

The following proposition states some properties of $\pi$-strongly big line bundles.

\begin{prop}\label{prop1} Take $\pi$ and $L$ as in Definition \ref{sbig}. If $L$ is $\pi$-strongly big, then it is big
and $\pi$-ample. In addition, if $L$ is nef, then $\pi_*(L\otimes K_{X/Y})$ is also nef.
\end{prop}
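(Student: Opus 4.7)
The plan is to prove the three claims (bigness, $\pi$-ampleness, and nefness of the direct image) in order.

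The first two are immediate from the definition. Writing $L^d=(L^d\otimes\pi^*\OOO_Y(-D))\otimes\pi^*\OOO_Y(D)$ expresses $L^d$ as the tensor product of an ample line bundle with the pull-back of an effective one, so $L^d$, and hence $L$, is big. For $\pi$-ampleness, observe that $\pi^*\OOO_Y(-D)|_{X_y}$ is trivial for every $y\in Y$, so $L^d|_{X_y}=(L^d\otimes\pi^*\OOO_Y(-D))|_{X_y}$ is the restriction of an ample bundle to a subvariety and hence ample; thus $L|_{X_y}$ is ample.

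For the nef direct image, the idea is to adapt the Bloch--Gieseker and Viehweg argument from the proof of Theorem \ref{main1}, exploiting the nefness of $L$ to run it at arbitrarily large exponents. Since $L$ is nef and $L^d\otimes\pi^*\OOO_Y(-D)$ is ample, for every integer $n\geq d$ the line bundle $L^n\otimes\pi^*\OOO_Y(-D)=L^{n-d}\otimes(L^d\otimes\pi^*\OOO_Y(-D))$ is ample. For each such $n$, I apply Theorem \ref{BG} to $\OOO_Y(D)$ with exponent $n$ to obtain a finite surjective map $f_n\colon\widetilde Y_n\to Y$ and a line bundle $M_n$ on $\widetilde Y_n$ with $f_n^*\OOO_Y(D)=M_n^n$, and form the pull-back fibre square giving $\widetilde\pi_n\colon\widetilde X_n\to\widetilde Y_n$ and $\widetilde f_n\colon\widetilde X_n\to X$. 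Setting $\mathcal{L}_n=\widetilde f_n^*L\otimes\widetilde\pi_n^*M_n^{-1}$, one has $\mathcal{L}_n^n=\widetilde f_n^*(L^n\otimes\pi^*\OOO_Y(-D))$, which is the pull-back of an ample bundle by a finite surjective map and hence ample, so $\mathcal{L}_n$ itself is ample. Theorem \ref{Vie} then gives that $\widetilde\pi_{n*}(\mathcal{L}_n\otimes K_{\widetilde X_n/\widetilde Y_n})$ is nef on $\widetilde Y_n$, and the projection formula together with flat base change identify this bundle with $f_n^*E\otimes M_n^{-1}$, where $E=\pi_*(L\otimes K_{X/Y})$. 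Hence $f_n^*E\otimes M_n^{-1}$ is nef on $\widetilde Y_n$ for every $n\geq d$.

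To conclude that $E$ is nef, I use the standard characterization via smooth curves: it suffices to verify $\deg Q\geq 0$ for every morphism $\phi\colon\mathcal{C}\to Y$ from a smooth projective curve and every line-bundle quotient $\phi^*E\twoheadrightarrow Q$. Fix such $\phi$, $Q$ and set $C'=\phi_*[\mathcal{C}]$. For each $n\geq d$, pick an irreducible component $\widetilde{\mathcal{C}}_n$ of the normalization of $\mathcal{C}\times_Y\widetilde Y_n$, with induced morphisms $\widetilde\phi_n\colon\widetilde{\mathcal{C}}_n\to\widetilde Y_n$ and $\widetilde{\mathcal{C}}_n\to\mathcal{C}$ of some degree $\delta_n>0$. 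Pulling back $\phi^*E\twoheadrightarrow Q$ to $\widetilde{\mathcal{C}}_n$ and tensoring with $\widetilde\phi_n^*M_n^{-1}$ produces a line-bundle quotient of $\widetilde\phi_n^*(f_n^*E\otimes M_n^{-1})$, which by the nefness established above has non-negative degree. The relation $M_n^n=f_n^*\OOO_Y(D)$ gives $\deg\widetilde\phi_n^*M_n=\delta_n(D\cdot C')/n$, so the inequality reduces to $\deg Q\geq (D\cdot C')/n$. Since $\deg Q$ is an integer independent of $n$ and the right-hand side tends to $0$ as $n\to\infty$, one obtains $\deg Q\geq 0$, so $E$ is nef. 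The principal obstacle lies precisely in this asymptotic step: the nefness of $L$ is what permits $n$ to be taken arbitrarily large, so that the twist by $M_n^{-1}$ becomes numerically negligible in the limit.
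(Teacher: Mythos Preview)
Your treatment of the first two assertions (bigness and $\pi$-ampleness) coincides with the paper's: both argue that $L^d$ is the product of an ample line bundle with the pullback of an effective one, and that the restriction to fibres kills the $\pi^*\OOO_Y(-D)$ factor.

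For the nefness of $\pi_*(L\otimes K_{X/Y})$, however, your approach is genuinely different. The paper simply invokes \cite[Theorem~1]{Mour}, i.e.\ Mourougane's analytic result that for $L$ nef and $\pi$-big the direct image is nef. You instead run an algebraic argument in the spirit of the paper's own proof of Theorem~\ref{main1}: using nefness of $L$ to make $L^n\otimes\pi^*\OOO_Y(-D)$ ample for every $n\geq d$, taking Bloch--Gieseker covers of exponent $n$, applying Viehweg's theorem to obtain nefness of $f_n^*E\otimes M_n^{-1}$, and then letting $n\to\infty$ so that the $M_n^{-1}$ twist becomes numerically negligible on test curves. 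This argument is correct and, in a sense, more consonant with the paper's declared aim of providing algebro-geometric proofs, since it avoids the appeal to Mourougane's analytic methods. The trade-off is length: the paper dispatches the statement in one line by citation, whereas your route requires the full covering-and-limit machinery; on the other hand, your proof is self-contained within the tools (Theorems~\ref{BG} and \ref{Vie}) that the paper has already set up.
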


\begin{proof}
Since $\pi^*(\OOO_Y(D)) \,= \,\OOO_X(\pi^{-1}(D))$, and 
$\pi^{-1}(D)$ is effective, it follows from a
characterization of big divisors (see \cite[p.~141, Corollary 2.2.7]{La1}) that $L$ is big. 
On the other hand, by definition, and the fact that 
$L^d(-\pi^{*}\OOO_Y(D))\big\vert_{X_y} \,\simeq\, L^d\big\vert_{X_y}$
for all $y \,\in\, Y,$ we obtain that $L$ is $\pi$-ample; here $X_y$ denotes the fiber of $\pi$ over $y.$ 
With these observations, the second statement of the proposition follows from \cite[Theorem 1]{Mour}.
\end{proof}

Before proving Theorem \ref{main2} we need the following lemma
which is a consequence Kawamata coverings result (see \cite[Proposition 4.1.12]{La1}).

\begin{lem}\label{Kaw}
In Theorem \ref{BG} assume $Y$ is non-singular and $\xi \,=\, \OOO_Y(D),$ where $D$ is an effective divisor on $Y$ with
simple normal crossing support (i.e., the reduced divisor $D_{\rm red}$ is normal crossing divisor). Then
given any integer $d \,>\, 0$, there is a finite surjective morphism
$f\,:\,\widetilde{Y} \,\longrightarrow\, Y, $ and a line bundle $M\,=\,\OOO_{\widetilde Y}(\widetilde{D})$ on $\widetilde{Y}$,
such that $f^*\xi\,\simeq\, M^d,$ and $\widetilde{D}$ is effective.
\end{lem}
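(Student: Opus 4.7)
The plan is to reduce Lemma \ref{Kaw} to a direct application of Kawamata's covering lemma (\cite[Proposition 4.1.12]{La1}), using the crucial hypothesis that $D_{\rm red}$ is a simple normal crossing divisor on the smooth variety $Y$.

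First I would write $D\,=\,\sum_i a_i D_i$ where $D_i$ are the (smooth) irreducible components of $D_{\rm red}$ meeting transversally, and all $a_i$ are non-negative integers. Kawamata's covering lemma, applied to the SNC divisor $D_{\rm red}\,=\,\sum_i D_i$ and to the prescribed integer $d$, produces a finite surjective morphism $f\,:\,\widetilde{Y}\,\longrightarrow\, Y$ with $\widetilde{Y}$ smooth, together with effective (in fact smooth) divisors $\widetilde{D}_i$ on $\widetilde{Y}$ such that $f^*D_i\,=\,d\cdot \widetilde{D}_i$ as Cartier divisors for every $i$.

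Given this, I would set
\[
\widetilde{D}\,:=\,\sum_i a_i\widetilde{D}_i,
\]
which is effective because $a_i\,\geq\, 0$ and each $\widetilde{D}_i$ is effective. Then
\[
f^*D\,=\,\sum_i a_i f^*D_i\,=\,\sum_i a_i\cdot d\widetilde{D}_i\,=\, d\widetilde{D},
\]
and consequently, setting $M\,:=\,\OOO_{\widetilde Y}(\widetilde{D})$, we obtain
\[
f^*\xi\,=\,f^*\OOO_Y(D)\,\simeq\, \OOO_{\widetilde Y}(f^*D)\,=\, \OOO_{\widetilde Y}(d\widetilde{D})\,\simeq\, M^{\otimes d},
\]
which is exactly the conclusion of the lemma.

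The only non-formal ingredient in the argument is the existence of the cover $f$ realizing the divisibility $f^*D_i\,=\,d\widetilde{D}_i$ with $\widetilde{Y}$ still smooth; this is exactly what Kawamata's covering lemma supplies, and it is the sole point where the SNC hypothesis on $D_{\rm red}$ is used (to ensure that the iterated cyclic covers constructed component by component can be taken over a smooth total space). Once that input is invoked, combining it with the linearity $f^*D\,=\,\sum a_i f^* D_i$ and the evident effectiveness of $\widetilde{D}$ immediately yields the refinement of Theorem \ref{BG} asked for here.
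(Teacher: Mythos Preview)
Your proof is correct and is precisely the argument the paper has in mind: the paper gives no detailed proof of this lemma, simply stating that it ``is a consequence [of] Kawamata coverings result (see \cite[Proposition 4.1.12]{La1}).'' Your write-up just unpacks that citation---applying Kawamata's lemma to $D_{\rm red}=\sum D_i$ with all multiplicities equal to $d$, and then taking the $a_i$-weighted sum of the resulting $\widetilde{D}_i$---which is exactly how the cited proposition yields the statement.
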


For the proof of Theorem \ref{main2} we proceed as in the proof of Theorem \ref{main1}. Let $L$ be a $\pi$-strongly big line bundle on $X$. 
Then there exists an effective divisor $D$ on $Y$ with simple normal crossing support and an integer $d\,>\,0,$, such that $L^d \otimes
\pi^*{\OOO_Y (D)}^*$ is an ample line bundle. By Lemma \ref{Kaw} there exists a projective variety $\widetilde Y,$ a finite surjective
morphism $f\,:\,\widetilde{Y} \,\longrightarrow\, Y$ and a line bundle $M\,=\,\OOO_{\widetilde Y}(\widetilde{D})$ with $\widetilde D$
effective, such that $f^*(\OOO_Y(D))\,=\, M^d$. We have the commutative diagram:
\begin{equation}\label{e2}
\begin{CD}
\widetilde{X} @>\widetilde{f}>> X \\
@VV\widetilde{\pi} V @VV \pi V\\
\widetilde{Y} @>f>> Y
\end{CD}
\end{equation}
The line bundle $L^d \otimes \pi^*(\OOO_Y{(D)})^* $ is ample if and only if $\widetilde{f}^*( L^d \otimes \pi^*(\OOO_Y{(D)})^*) $ is ample. 
We have
\[ \begin{array}{rcl}
	{\widetilde f}^*(L^d \otimes \pi^*(\OOO_Y{(D)})^*)&\,=\, &({\widetilde f}^*L^d)\otimes ({\widetilde f}^*\pi^*(\OOO_Y{(D)})^*)\\
{}	&=\,& (\widetilde{f}^*L^d)\otimes (\widetilde{\pi} ^*(f^*\OOO_Y{(D)})^*) \\
{} &=\,& (\widetilde{f}^*L^d)\otimes (\widetilde{\pi}^* {M^*})^d \\
{} &=\,& ((\widetilde{f}^*L)\otimes (\widetilde{\pi}^* M^*))^d.
\end{array}
\]
Set $ \mathcal {L}\,=\, (\widetilde f^*L)\otimes \widetilde{\pi}^* M^*$.
Then $\mathcal L$ is ample, and hence from Theorem \ref{main1} it follows that $\widetilde{\pi}_*(\mathcal {L}\otimes
K_{\widetilde{X}/{\widetilde Y}})$ is ample. Now
$$\widetilde{\pi}_*(\mathcal{L}\otimes K_{\widetilde{X}/\widetilde{Y}})\,=\,\widetilde{\pi}_*((\widetilde{f}^{*}L)\otimes
K_{\widetilde{X}/\widetilde{Y}})\otimes M^*,$$ where $M\,=\, \OOO_{\widetilde Y}(\widetilde D)$ is effective. Then 
$\widetilde{\pi}_*((\widetilde{f}^{*}L)\otimes K_{\widetilde{X}/\widetilde{Y}})$ is big,
because an ample vector bundle tensored with an effective line bundle is big
(see \cite[Example 6.1.23]{La2}). 

But $\widetilde{\pi}_*(\mathcal{L}\otimes K_{\widetilde{X}/\widetilde Y})\,=\, f^*(\pi_*(L\otimes K_{X/Y}))$ by
\eqref{e2}. Since $f$ is finite we conclude that $\pi_*(L\otimes K_{X/Y})$ is big.

Moreover, if $L$ is $\pi$-strongly big and nef, by Proposition \ref{prop1} the direct image
$\pi_*(\mathcal{L}\otimes K_{\widetilde{X}/\widetilde{Y}})$ is nef. 
This completes the proof.

\section{Some observations}

\begin{prop}\label{prop2}
Let $L$ be a line bundle on a non-singular projective variety $X$ and $E$ a vector bundle on $X.$ Assume
that the vector bundle $E^{\otimes s}\otimes L$ is generated by its global sections for every integer $s\,>\!\!> \,0$.
Then $E$ is nef.
\end{prop}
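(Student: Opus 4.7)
The plan is to use the characterization of nefness via line bundle quotients of curve pullbacks: the vector bundle $E$ on $X$ is nef if and only if, for every morphism $f : D \to X$ from a smooth projective curve $D$ and every line bundle quotient $f^*E \twoheadrightarrow Q$, one has $\deg Q \geq 0$. This is equivalent to the standard definition via nefness of $\mathcal{O}_{\mathbb{P}(E)}(1)$, since a curve in $\mathbb{P}(E)$ corresponds, after normalization, to exactly such data $(D, f, Q)$, with $\deg Q$ computing the intersection number with $\mathcal{O}_{\mathbb{P}(E)}(1)$ (see \cite{La2}).

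Granting this criterion, I would fix arbitrary data $(D, f, Q)$ and pull back the hypothesis. Since global generation is preserved by pullback, the bundle
\[
f^*(E^{\otimes s} \otimes L) \,=\, (f^*E)^{\otimes s} \otimes f^*L
\]
is globally generated on $D$ for every $s \gg 0$. The surjection $f^*E \twoheadrightarrow Q$ gives, after taking $s$-th tensor powers and tensoring with $f^*L$, a surjection
\[
(f^*E)^{\otimes s} \otimes f^*L \,\twoheadrightarrow\, Q^{\otimes s} \otimes f^*L,
\]
so the quotient $Q^{\otimes s} \otimes f^*L$ is also globally generated on $D$. A globally generated line bundle on a projective curve has non-negative degree, hence
\[
s\,\deg Q \,+\, \deg f^*L \,\geq\, 0
\]
for every $s \gg 0$; dividing by $s$ and letting $s \to \infty$ forces $\deg Q \geq 0$.

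Since $(D, f, Q)$ was arbitrary, the criterion yields that $E$ is nef. The only mildly delicate point is the quoted characterization of nefness in terms of pullbacks to curves and line bundle quotients, which is however standard; beyond that the argument is purely formal, and the tensor-power hypothesis is strong enough to swamp the fixed twist by $L$ in the degree inequality, so I do not anticipate any genuine obstacle.
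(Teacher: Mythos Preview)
Your argument is correct and is essentially the same as the paper's: both invoke the curve-quotient criterion for nefness (\cite[Proposition~6.1.18]{La2}), tensor the surjection $f^*E \twoheadrightarrow Q$ up to $(f^*E)^{\otimes s}\otimes f^*L \twoheadrightarrow Q^{\otimes s}\otimes f^*L$, and use that a globally generated bundle cannot surject onto a line bundle of negative degree. The only cosmetic difference is that the paper phrases it as a proof by contradiction (assume a quotient of negative degree exists), whereas you argue directly that every quotient has non-negative degree.
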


\begin{proof}
Assume that $E$ is not nef. Then a criterion for nef
bundles (see \cite[Proposition 6.1.18]{La2}) gives that
there is a non-constant morphism $f \,:\,C \,\longrightarrow\, X$ from
a non-singular projective curve $C$, and a line bundle $M$ on $C$ of degree
$ n \,<\, 0$, such that there is a surjective map $$ f^*E \,\longrightarrow\, M.$$
So for all $s \,>\, 0$ there is a surjection 
$$(f^*E)^{\otimes s} \otimes f^{*}L\,=\, f^*(E^{\otimes s} \otimes L)\,\longrightarrow\, M^{\otimes s} \otimes (f^{*}L).$$

Note that the degree of the line bundle $M^{\otimes s} \otimes (f^{*}L)$
is $n.s + d,$ where $d$ is the degree of the line bundle $f^{*}L.$ Since $n\,<\,0$, we have $n.s + d \,<\, 0$ for all large $s.$
The bundle $(f^*E)^{\otimes s} \otimes (f^{*}L)$ is generated by its global sections, by hypothesis, for
all large $s.$ Hence it cannot admit a surjective map to a negative line bundle. 
Thus we get a contradiction to the assumption that $E$ is not nef.
This proves the proposition.
\end{proof}

We give an example to show that the assumptions do hold.

\begin{example}\label{ex1}
Let $Z$ and $Y$ be two smooth projective varieties, and let $X \,=\, Y \times Z.$ Let $\pi_1\,:\, X 
\,\longrightarrow\, Y$ and $\pi_2\,: \,X\,\longrightarrow\, Z$ be the natural projections. If $L_1$ is a big 
line bundle on $Y$, and $L_2$ is an ample line bundle on $Z$, then the line bundle $M\,=\,(\pi_1^*L_1) 
\otimes (\pi_2^*L_2)$ is big on $X$ and $(\pi_1)_*(M\otimes K_{X/Y} )\,=\, {\text H}^0(Z,\,L_2\otimes 
K_Z)\otimes L_1$ is big provided ${\text H}^0(Z,\,L_2\otimes K_Z) \,\neq\, 0$.
\end{example}

\begin{remark}
Note that in Example \ref{ex1} the line bundle $M$ is $\pi_1$-strongly big
provided the big bundle $L_1$ satisfies the following property: There is an integer $d\, >\, 0$, and a divisor
$D$ on $Y$ with a simple normal crossing support, such that $L^d(-D)$ is ample on $Y.$ 
\end{remark}

Let $\pi\,:\, X\,\longrightarrow\, Y$ be a smooth fibration of smooth projective varieties.
Next example shows that when $L$ is nef and $\pi$-big, the direct image $\pi_* (L\otimes K_{X/Y})$ is not necessarily big. 

\begin{example}\label{ex2}
Let $Z$ and $Y$ be two smooth projective varieties, and let $X \,=\, Y \times Z.$ Let $\pi_1\,:\, X \,\longrightarrow\, Y$ and
$\pi_2\,:\, X \, \longrightarrow\, Z$ be the natural projections. 
Let $M$ be an ample line bundle on $Z$. Then the line bundle $ \pi_2^* M$ is nef and
$\pi_1$-ample (and hence $\pi_1$-big) on $X $ and 
$(\pi_1)_*((\pi_2^*M)\otimes K_{X/Y} )\,=\, {\text H}^0(Z,\,M\otimes K_Z)\otimes \OOO_Y$ is a
trivial bundle, if ${\text H}^0(Z,\,L_2\otimes K_Z) \,\neq\, 0$, which is only nef but not big. 
\end{example}

\begin{Def}\label{bbig}
Let $\pi\,:\,X\,\longrightarrow\, Y$
be a smooth fibration of smooth projective varieties. A line bundle $L$ on $X$ is said to be {\em $\pi$-weakly big} if there
is an effective divisor $D$ on $Y$ 
such that $L^{d}\otimes f^{*}(\OOO_Y(D))^*$ is ample for some integer
$d\,>\,0,$ where $f^{*}(\OOO_Y(D))^*\,=\, f^{*}(\OOO_Y(-D)).$
\end{Def}

\begin{remark}
For a $\pi\,:\,X\,\longrightarrow\, Y$
smooth fibration of smooth projective varieties a $\pi$-strongly big line bundle 
is $\pi$-weakly big but is $\pi$-weakly big then it may not be
$\pi$-strongly big.
\end{remark}

\begin{prop}
Let $\pi\,:\,X\,\longrightarrow\, Y$ as in Theorem \ref{main1}
and $L$ be a $\pi$-weakly big line bundle on $X.$ Then there is an integer $r > 0$
such that the bundles $\pi_*(L^{rm}\otimes K_{X/Y})$ is big for all $m > 0.$
\end{prop}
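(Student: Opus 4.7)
The plan is to exploit the $\pi$-weakly big hypothesis to split $L^{rm}$, via the projection formula, into the pullback of an effective line bundle on $Y$ times an ample line bundle on $X$; Theorem~\ref{main1} then handles the ample part, and the effective twist upgrades ``ample'' to ``big.''

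Concretely, by Definition~\ref{bbig} we can write $L^d = A \otimes \pi^*\OOO_Y(D)$, where $A$ is an ample line bundle on $X$, $D$ is an effective divisor on $Y$, and $d>0$. Setting $r = ds$ with $s$ to be chosen, for any $m \geq 1$ we have
\[
L^{rm} \;=\; A^{sm} \otimes \pi^*\OOO_Y(smD),
\]
and the projection formula gives
\[
\pi_*(L^{rm}\otimes K_{X/Y}) \;=\; \pi_*(A^{sm} \otimes K_{X/Y})\,\otimes\, \OOO_Y(smD).
\]
Since $A^{sm}$ is ample, Theorem~\ref{main1} tells us $\pi_*(A^{sm} \otimes K_{X/Y})$ is either zero or an ample vector bundle. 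Once we know it is nonzero, twisting an ample vector bundle by the effective line bundle $\OOO_Y(smD)$ yields a big vector bundle by \cite[Example 6.1.23]{La2}, and we are done.

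The real work is therefore to choose $s$ so that $\pi_*(A^{sm} \otimes K_{X/Y})$ is nonzero for \emph{every} $m \geq 1$ simultaneously. Because $A^{sm}$ is ample, Kodaira vanishing on each fiber gives $R^i\pi_*(A^{sm} \otimes K_{X/Y}) = 0$ for $i \geq 1$, so by cohomology and base change the rank of $\pi_*(A^{sm} \otimes K_{X/Y})$ at $y\in Y$ equals $\chi(X_y, A^{sm}|_{X_y}\otimes K_{X_y})$. Since $\pi$ is a smooth fibration, this Euler characteristic is a polynomial $P(k)$ in the variable $k=sm$, independent of $y$, whose leading term $\tfrac{1}{n!}\bigl(c_1(A|_{X_y})\bigr)^n k^n$ is strictly positive (here $n$ is the relative dimension and $A|_{X_y}$ is ample). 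Hence $P(k)>0$ for all $k$ beyond some threshold $k_0$, and any choice $s \geq k_0$ works. The only obstacle is arranging this uniform choice of $s$, and asymptotic Riemann--Roch together with flatness of $\pi$ takes care of it.
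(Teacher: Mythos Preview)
Your argument follows the same line as the paper's: use the projection formula to write $\pi_*(L^{rm}\otimes K_{X/Y})$ as an ample direct image (via Theorem~\ref{main1}) tensored with the effective line bundle $\OOO_Y(mD)$, and then invoke \cite[Example~6.1.23]{La2}. The paper simply takes $r$ equal to the integer $d$ from Definition~\ref{bbig} and does not address the possibility that the direct image coming out of Theorem~\ref{main1} could be zero; your extra step of enlarging $r$ to $ds$ with $s$ chosen via asymptotic Riemann--Roch on the fibers is a legitimate refinement that closes this gap.
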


{\bf Proof:} By definition of $\pi$-weakly big line bundle there is a effective divisor
$D$ on $Y$ and a positive integer $r$ such that $L^r\otimes f^{*}(\OOO_Y(D))^*$ is 
an ample and hence $L^{rm}\otimes f^{*}(\OOO_Y(mD))^*$ is ample for all $m>0.$
By Theorem \ref{main1} $\pi_*(L^{rm}\otimes f^{*}(\OOO_Y(mD))^*\otimes K_{X/Y})$
is ample for all $m \,>\, 0$. But by projection formula
$$\pi_*(L^{rm}\otimes\otimes K_{X/Y}) \,=\, \pi_*(L^{rm}\otimes f^{*}(\OOO_Y(mD))^*\otimes K_{X/Y})\otimes \OOO_Y(mD)$$
for all $m > 0.$ Thus $\pi_*(L^{rm}\otimes\otimes K_{X/Y})$ is big for $m>0.$

{\bf Conjecture:}
Let $\pi\,:\,X\,\longrightarrow\, Y$ as in Theorem \ref{main1} 
and $L$ be a big and nef line bundle on $X.$ Then $\pi_*(L\otimes K_{X/Y})$
is nef and big.

\bigskip

\section*{Acknowledgements}

DSN thanks CEMPI of Lille university for the financial support.

\end{document}